\newtheorem{prop}{Proposition}
\journal{Involve}
\begin{document}

\begin{frontmatter}

\title{Exploring Integration by Differentiation} 

\author{R. D. George and C. Vignat}

\affiliation{organization={Department of Mathematics},
            addressline={Tulane University}, 
            city={New Orleans},
            country={U.S.A.}}
\ead{rgeorge7@tulane.edu, cvignat@tulane.edu}
\begin{abstract}
This work validates and extends the method of integration by differentiation, initially introduced by A. Kempf et al., and demonstrates its compatibility with classical rules of integration. It provides applications to classical integrals, including one by Ramanujan, and extends the method to the multivariate setting. Volumes of simplexes are computed by acting with indicator functions on elementary kernels, and a rotationally invariant formulation is derived. Finally, the method is extended to Jackson's q-integral.
\end{abstract}

\begin{keyword}
Integration by differentiation, symbolic integration, pseudo-differential operators, Laplace transform, Fourier analysis, Ramanujan integrals, indicator functions, volume computation, multivariate integration, q-calculus, Jackson integrals.

\MSC 26A42, 33F10, 33B15, 44A10, 26B15
\end{keyword}

\end{frontmatter}


\section{The Method of Integration by Differentiation}
\subsection{Origins and Motivation}
The method of integration by differentiation was introduced by Kempf, Jackson, and Morales~\cite{Kempf, Kempfb} and further developed by Jia, Tang, and Kempf~\cite{Jia}. Initially motivated by problems in quantum field theory, it exploits the relationship between differentiation and multiplication inside integrals of the Laplace transform type.

\subsection{Core Idea}
The function $x\mapsto e^{-xy}$ is an eigenfunction of the differentiation operator \(\partial_x = \frac{d}{dx}\), satisfying
\[
\partial_x^n e^{-xy} = (-y)^n e^{-xy}.
\]
Thus, for any polynomial and, by extension, for functions $f$ satisfying the appropriate convergence conditions established in \cite{Jia},
\begin{equation}
\label{main property}
f(-\partial_x) e^{-xy} = f(y) e^{-xy}.    
\end{equation}
One example of such analytic function for which \eqref{main property} holds is the exponential function $f(x)=e^{ax},$  
as a consequence of the shift property $e^{-a \partial_x}g(x)=g(x-a)$ of the exponential differential operator $e^{-a \partial_x}.$

Using property \eqref{main property},   Laplace-type integrals can be rewritten as
as
\begin{align*}
    \int_0^\infty f(y) e^{-xy} \, dy 
    &= \int_0^\infty f(-\partial_x) e^{-xy} \, dy. 
\end{align*}
Noticing that the operator $f(-\partial_x)$ does not depend on the integration variable $y,$ we deduce the formal identity
\[
\int_0^\infty f(y) e^{-xy} \, dy
= f(-\partial_x) \int_0^\infty e^{-xy} \, dy 
    = f(-\partial_x) \left( \frac{1}{x} \right).
\]
Hence the integral can be computed as the result of the action of the pseudo-differential operator \(f(-\partial_x)\) on the function $\frac{1}{x}$ through the power series expansion of \(f\).
For more information on the concept of pseudo-differential operators, the reader is invited to consult \cite{Stein}.

\subsection{General Formulation}
More generally, for an arbitrary integration domain \(I\) where the integral is convergent,
\[
\int_I f(y) e^{-xy} \, dy = f(-\partial_x)  \int_I e^{-xy} \, dy.
\]
For example,
\[
\int_a^b f(y) e^{-xy} \, dy = f(-\partial_x)  \frac{e^{-ax} - e^{-bx}}{x},
\]
and
\[
\int_{-\infty}^{\infty} f(y) e^{-\imath xy} \, dy = 2\pi f(-\imath \partial_x) \delta(x),
\]
where $\delta(x)= \int_{-\infty}^{\infty} e^{-\imath xy} \, dy$
denotes the Dirac distribution.

Moreover, whenever the limit and integration operators can be exchanged, the specialization
\[
\int_I f(y)  \, dy = \lim_{x \to 0}
 \int_I f(y) e^{-xy} \, dy
\]
produces the following rules
\begin{equation}
\label{Laplace limit}
\int_{0}^{\infty} f(y)  \, dy
=
\lim_{x \to 0}
f(\partial_x)\left(\frac{1}{x}\right),
\end{equation}
\begin{equation}
\label{Fourier limit}
\int_{-\infty}^{\infty} f(y)  \, dy
=
2\pi\lim_{x \to 0}
 f(-\imath \partial_x)\delta(x).
\end{equation}

\subsection{Convergence and Validity Conditions}
Following Jia, Tang, and Kempf~\cite{Jia}, the integration by differentiation method requires specific conditions:
\begin{itemize}
\item For finite intervals: $f$ must have convergent power series on $[a,b]$
\item For semi-infinite integrals: $f$ must be entire and Laplace transformable
\item For integrals like $\int_0^{\infty} f(x) dx$: $f$ must have exponential-polynomial form $\sum c_j e^{-b_j x} x^{n_j}$ with $b_j > 0$
\item The domain of convergence may be restricted, and limits like $y \to 0$ may not exist without regularization
\end{itemize}

A quick inspection shows that these rules can be extended to integrals of the form $\int_{I} e^{-xy}f(y)g(y)dy$ as follows:

\begin{prop}
\label{prop1}
    We have the following equivalent evaluations 
    \begin{align*}
    \int_{I} e^{-xy}f(y)g(y)dy = f(-\partial_x)\int_{I} e^{-xy}g(y)dy\\
    = g(-\partial_x)\int_{I} e^{-xy}f(y)dy
    = f(-\partial_x)g(-\partial_x)\int_{I} e^{-xy}dy
    \end{align*}
\end{prop}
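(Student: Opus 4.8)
The plan is to derive all three identities from the single eigenfunction relation \eqref{main property} together with the elementary observation that the operator $f(-\partial_x)$ acts only on the variable $x$ and therefore commutes with integration in $y$. I would first fix the standing hypotheses: $f$ and $g$ satisfy, on the domain $I$, the convergence conditions of Jia, Tang, and Kempf~\cite{Jia} recalled above, so that $f(-\partial_x)$ and $g(-\partial_x)$ are well-defined pseudo-differential operators and the relevant series of derivatives may be integrated term by term.

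For the first equality, I start from the left-hand side and use \eqref{main property} to replace $f(y)\,e^{-xy}$ by $f(-\partial_x)\,e^{-xy}$, obtaining
\[
\int_I e^{-xy} f(y) g(y)\,dy = \int_I \bigl(f(-\partial_x) e^{-xy}\bigr) g(y)\,dy .
\]
Since $g(y)$ is independent of $x$ and $f(-\partial_x)$ is independent of $y$, the operator may be pulled outside the integral, giving $f(-\partial_x)\int_I e^{-xy} g(y)\,dy$, as claimed. The second equality is the same computation with the roles of $f$ and $g$ swapped, which is legitimate because the integrand is symmetric in $f$ and $g$.

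For the third equality I iterate the device: inside $f(-\partial_x)\int_I e^{-xy} g(y)\,dy$ I replace $g(y)\,e^{-xy}$ by $g(-\partial_x)\,e^{-xy}$ using \eqref{main property} once more, then pull $g(-\partial_x)$ out of the $y$-integral, arriving at $f(-\partial_x)\,g(-\partial_x)\int_I e^{-xy}\,dy$. Equivalently one substitutes $f(y)g(y)\,e^{-xy} = f(-\partial_x)g(-\partial_x)\,e^{-xy}$ in one stroke, which is valid because $f(-\partial_x)$ and $g(-\partial_x)$ are both power series in the single operator $\partial_x$ and hence commute, so the order in which they are applied is irrelevant.

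The main obstacle is not the algebra, which is just three applications of \eqref{main property}, but the analytic justification of two interchanges: moving the infinite-order operator $f(-\partial_x)$ past the integral sign, and differentiating under the integral term by term in the power-series expansion of $f$ (and $g$). These steps are exactly what the validity conditions cited from \cite{Jia} are designed to underwrite — entireness and Laplace-transformability ensuring uniform convergence of the relevant series on compact sets — so I would invoke those hypotheses explicitly and note that, under them, a Fubini/dominated-convergence argument makes every manipulation rigorous; the convergence of the integral over $I$ is assumed throughout, as in the statement.
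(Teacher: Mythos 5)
Your proposal is correct and follows essentially the same route as the paper, which presents Proposition~\ref{prop1} as a ``quick inspection'' extension of the derivation in Section~1.2: apply the eigenfunction relation \eqref{main property} to pull each of $f(-\partial_x)$ and $g(-\partial_x)$ (or their product) outside the $y$-integral, relying on the convergence conditions of \cite{Jia} to justify the interchange. Your explicit attention to the commutation of the two operators and to the analytic hypotheses is a welcome elaboration of what the paper leaves implicit.
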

This result allows us to choose which function among $f,g$ or $fg$ should be extracted from the integral to form the pseudo-differential operator.

These powerful identities transform the potentially challenging task of integration into the application of a differential operator on a simple function, offering an alternative to traditional methods. As expected, and as several examples will show, there is a trade-off between the complexity of the pseudo-differential operator extracted from the integral and the complexity of the remaining integral.

In the following sections, we explore applications (section 2), verify compatibility with classical rules (section 3), and extend the method of integration by differentiation to broader settings (sections 4 to 7).

\section{Applications}
Now that  the basic framework is established, let us illustrate the method through a series of examples. These applications demonstrate how the operator approach can sometimes simplify otherwise challenging computations, and set the stage for a later extension to multivariate cases.

\subsection{A Classic Trigonometric Integral}
The evaluation
\[
\int_0^{\infty} \frac{\sin x}{x} \, dx = \frac{\pi}{2}
\]
may be obtained in many different ways, such as using Cauchy's residue theorem or classical Fourier analysis. The reader is referred to the entertaining article by Hardy~\cite{Hardy}, in which a "system of marking" is used to evaluate the complexity of various proofs. 

Our interest in the method of integration by differentiation was triggered by its efficiency in evaluating such integrals. The proof given by Kempf et al. \cite{Kempf} is as follows: choose $f(x) = \frac{\sin x}{x}$
in \eqref{Laplace limit} to produce
\[
\int_0^\infty \frac{\sin(x)}{x} \, dx 
= \lim_{y \to 0} \frac{1}{2i}  \frac{e^{-i\partial_y} - e^{i\partial_y} }{-\partial_y} \frac{1}{y}.
\]
The pseudo-differential operator $\frac{1}{\partial_y}$
being the integration operator, we deduce \( \frac{1}{\partial_y}\frac{1}{y}=\ln(y),\) where we use the principal determination of the logarithm, and the desired integral reads
\[
\lim_{y \to 0} \frac{-1}{2i} \left( e^{-i\partial_y} - e^{i\partial_y} \right) \ln(y)
= \lim_{y \to 0} \frac{ \ln(y + i) - \ln(y - i) }{2i} .
\]
This limit is easily evaluated as
\[
\frac{ \ln(i) - \ln(-i)}{2i}  = \frac{\pi}{2}.
\]

\subsection*{An Alternative Approach}
We propose here an evaluation that is slightly different from, but equivalent to, the one initially given by Kempf by choosing the easier pseudo-differential operator $f(-\partial_y)$ with $f(x)=\frac{1}{x}$:
\[
\int_{0}^{\infty} \frac{\sin x}{x} \, dx = \lim_{y \to 0} \left( \frac{1}{-\partial_y} \right) \int_{0}^{\infty} \sin x\, e^{-xy} \, dx.
\]
Next, denoting $\text{Im}$ as the imaginary part, this integral is identified as:
\[
\text{Im} \lim_{y \to 0} \left( \frac{1}{-\partial_y} \right) \int_{0}^{\infty} e^{-x(y-\imath)} \, dx
= \text{Im} \lim_{y \to 0} \left( \frac{1}{-\partial_y} \right) \left( \frac{1}{y - \imath} \right).
\]
The value of the integral is deduced as
\[
-\text{Im} \lim_{y \to 0} \log(y - \imath)
= -\text{Im} \log(-\imath)
= \frac{\pi}{2}.
\]

As noticed by Hardy in "Mr. Berry's first proof"~\cite{Hardy}, the previous steps involve several inversions of limits that should be justified with care.

\subsection{An integral by Ramanujan}
Entry 16(ii) p.264 of Ramanujan's second notebook \cite{RII} reads: 
\begin{prop}
For $n,p$ integers, the integral
\begin{equation*}
I_{n,p}=\int_{0}^{\infty}\frac{\sin^{2n+1}x}{x}\cos\left(2px\right)dx
\end{equation*}
is equal to 
\begin{equation}
\label{Inp}
I_{n,p}=\left(-1\right)^{p}\frac{\sqrt{\pi}}{2}\frac{\Gamma\left(n+1\right)\Gamma\left(n+\frac{1}{2}\right)}{\Gamma\left(n-p+1\right)\Gamma\left(n+p+1\right)}.
\end{equation}
\end{prop}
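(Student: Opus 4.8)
The plan is to reduce $I_{n,p}$ to a finite linear combination of Dirichlet-type integrals $\int_0^\infty \frac{\sin(\mu x)}{x}\,dx$, each of which is evaluated by the method of integration by differentiation exactly as in the flagship computation of Section 2.1, and then to collapse the resulting alternating binomial sum by a classical identity and convert it to the $\Gamma$-form of \eqref{Inp}. Since $\cos$ is even, $I_{n,p}=I_{n,-p}$, so we may assume $p\ge 0$ throughout.

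First I would linearize the odd power of the sine. Writing $\sin x=(e^{\imath x}-e^{-\imath x})/(2\imath)$ and expanding by the binomial theorem gives the power-reduction formula
\[
\sin^{2n+1}x=\frac{1}{4^n}\sum_{l=0}^{n}(-1)^{l}\binom{2n+1}{n-l}\sin\big((2l+1)x\big),
\]
and multiplying by $\cos(2px)/x$ together with the product-to-sum identity $\sin((2l+1)x)\cos(2px)=\tfrac12\big[\sin((2l+1+2p)x)+\sin((2l+1-2p)x)\big]$ turns the integrand into a finite sum of terms $\sin(\mu x)/x$ with odd $\mu$. Integrating term by term, each such term is handled by extracting the operator $(-\partial_y)^{-1}$ from $\int_0^\infty \sin(\mu x)\,e^{-xy}\,dx$ and letting $y\to 0$, precisely the Section 2.1 argument run with frequency $\mu$ in place of $1$; this yields $\int_0^\infty \frac{\sin(\mu x)}{x}\,dx=\frac{\pi}{2}\operatorname{sgn}(\mu)$. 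Because $2l+1+2p>0$ always, while $2l+1-2p$ is never zero and has sign $+1$ for $l\ge p$ and $-1$ for $l<p$, the two contributions add to $\pi$ when $l\ge p$ and cancel when $l<p$. Hence
\[
I_{n,p}=\frac{\pi}{2\cdot 4^n}\sum_{l=p}^{n}(-1)^{l}\binom{2n+1}{n-l},
\]
the sum being empty (so $I_{n,p}=0$) when $p>n$, which matches the pole of $1/\Gamma(n-p+1)$ on the right side of \eqref{Inp}.

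It then remains to evaluate the alternating binomial sum. Substituting $l=p+j$ and reversing the order of summation rewrites it as $(-1)^p(-1)^N\sum_{i=0}^{N}(-1)^{i}\binom{2n+1}{i}$ with $N=n-p$, and the telescoping identity $\sum_{i=0}^{N}(-1)^{i}\binom{m}{i}=(-1)^{N}\binom{m-1}{N}$ (immediate from Pascal's rule) gives $\sum_{l=p}^{n}(-1)^{l}\binom{2n+1}{n-l}=(-1)^p\binom{2n}{n-p}$. Substituting and writing $\binom{2n}{n-p}=\frac{(2n)!}{(n-p)!\,(n+p)!}$ with $(2n)!=\frac{4^n\,n!\,\Gamma(n+\tfrac12)}{\sqrt{\pi}}$ (Legendre's duplication formula) and $n!=\Gamma(n+1)$, $(n\pm p)!=\Gamma(n\pm p+1)$ produces exactly
\[
I_{n,p}=(-1)^{p}\frac{\sqrt{\pi}}{2}\,\frac{\Gamma(n+1)\,\Gamma\!\left(n+\tfrac12\right)}{\Gamma(n-p+1)\,\Gamma(n+p+1)}.
\]

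The main obstacle is not any individual computation — all the ingredients are elementary — but the rigorous justification of the term-by-term integration: each $\int_0^\infty \sin(\mu x)/x\,dx$ is only conditionally convergent, and the method inserts the operator $(-\partial_y)^{-1}$ under the integral sign and then sends $y\to 0$, so the admissibility of interchanging these two limiting operations must be argued with care, exactly as Hardy and the present authors stress. A secondary point that must be checked carefully is the sign bookkeeping, both in the power-reduction formula and in $\operatorname{sgn}(2l+1-2p)$: an off-by-one shift there would alter which terms survive and break the final binomial identity.
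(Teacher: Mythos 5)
Your proof is correct, and it takes a genuinely different route from the paper's. The paper symmetrizes the integral to the whole real line, views it as a Fourier transform, and applies the full operator $\sin^{2n+1}\!\left(i\partial_p/2\right)/(-i\partial_p)$ to the Dirac delta; integrating once turns $\delta(p)$ into the Heaviside function, and binomially expanding the operator into exponential shifts yields $I_{n,p}$ as an alternating sum of shifted Heaviside functions $H\!\left(p+n+\tfrac12-k\right)$ valid for all real $p$ --- but the paper then \emph{leaves to the reader} the verification that this collapses to Ramanujan's $\Gamma$-formula at integer $p$. You instead linearize $\sin^{2n+1}x$ first, reduce everything to elementary Dirichlet integrals $\int_0^\infty \sin(\mu x)/x\,dx=\tfrac{\pi}{2}\operatorname{sgn}(\mu)$ (each evaluated by the Section~2.1 operator argument with frequency $\mu$), and then carry out the alternating binomial collapse $\sum_{l=p}^{n}(-1)^l\binom{2n+1}{n-l}=(-1)^p\binom{2n}{n-p}$ and the Legendre duplication step explicitly. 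The two computations are structurally parallel --- your sign bookkeeping for $2l+1\pm 2p$ is exactly the paper's count of which arguments $p+n+\tfrac12-k$ are nonnegative --- but yours uses the differentiation method only for the scalar Dirichlet integral rather than for the full pseudo-differential operator, stays on the half-line, and actually completes the reduction to \eqref{Inp} that the paper omits. What you lose relative to the paper is the generalization to real $p$ (the second proposition in that section); what you gain is a fully self-contained and elementary verification of Ramanujan's closed form, including the degenerate case $p>n$ where the empty sum matches the pole of $1/\Gamma(n-p+1)$. All the individual steps (the power-reduction formula, the product-to-sum identity, the telescoping binomial identity via Pascal's rule, and the duplication formula) check out.
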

The proof produced by B. Berndt  in \cite{RII} is based on the fact that the sequence of integrals $I_{n,p}$ satisfies the recurrence  
\[
I_{n,p}=\frac{1}{2}I_{n-1,p}-\frac{1}{4}I_{n-1,p+1}-\frac{1}{4}I_{n-1,p-1}.
\]
We revisit this result using the method of integration by differentiation, producing the following generalization to real values of the parameter $p.$
\begin{prop}
For all integers $n$ 
and real numbers
$p,$ the integral $I_{n,p}$ is equal to
\[
I_{n,p}=\frac{\left(-1\right)^{n}\pi}{2^{2n+1}}\sum_{k=0}^{2n+1}\binom{2n+1}{k}\left(-1\right)^{k}H\left(p+n+\frac{1}{2}-k\right)
\] 
where $H(x)$ is the Heaviside function
\[
H(x)=\begin{cases}
    1, \,\,x\ge 0\\
    0, \,\,x< 0.
\end{cases}
\]
Moreover, this evaluation coincides with Ramanujan's formula \eqref{Inp}
at integer values of $p.$
\end{prop}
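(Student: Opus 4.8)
The plan is to apply the method of integration by differentiation to $I_{n,p}$ by first expanding $\sin^{2n+1}x$ via the binomial theorem in exponential form. Writing $\sin x = \frac{1}{2\imath}(e^{\imath x}-e^{-\imath x})$, we get $\sin^{2n+1}x = \frac{1}{(2\imath)^{2n+1}}\sum_{k=0}^{2n+1}\binom{2n+1}{k}(-1)^k e^{\imath(2n+1-2k)x}$, and combining with $\cos(2px)$ expressed in exponentials, $I_{n,p}$ becomes a finite linear combination of integrals of the form $\int_0^\infty \frac{1}{x} e^{\imath a x}\,dx$ for various real constants $a = (2n+1-2k)\pm 2p$. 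So the first step is to reduce the whole problem to understanding the ``elementary kernel'' $\int_0^\infty \frac{e^{\imath a x}}{x}\,dx$ as a distribution in $a$.

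Next I would evaluate that elementary kernel using the method: choose $f(x)=\frac{1}{x}$ and regularize via $e^{-xy}$, so that $\int_0^\infty \frac{e^{\imath a x}}{x}\,dx = \lim_{y\to 0}\left(\frac{1}{-\partial_y}\right)\int_0^\infty e^{-(y-\imath a)x}\,dx = \lim_{y\to 0}\left(\frac{1}{-\partial_y}\right)\frac{1}{y-\imath a} = -\lim_{y\to 0}\log(y-\imath a)$, exactly as in the alternative approach to the $\frac{\sin x}{x}$ integral. The real part $-\lim_{y\to 0}\log|y-\imath a|$ is a divergent constant independent of the sign of $a$; since the outer sum has $\sum_k \binom{2n+1}{k}(-1)^k = 0$ (and similarly after pairing the $\pm 2p$ terms), these divergent pieces cancel. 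What survives is the imaginary part, $-\operatorname{Im}\log(-\imath a) = \frac{\pi}{2}\operatorname{sgn}(a)$, i.e. the contribution behaves like $\frac{\pi}{2}(2H(a)-1)$ up to the constant that cancels. Collecting signs and prefactors — $(2\imath)^{2n+1} = (-1)^n 2^{2n+1}\imath$ — and carefully reorganizing the double sum over $k$ and over the two values $\pm 2p$ into a single sum of Heaviside functions with arguments $p+n+\frac12-k$ should produce the claimed formula; the half-integer shift $+\frac12$ arises precisely because $a$ is never zero for real $p$ plus odd integer, so $\operatorname{sgn}$ is unambiguous and one can replace $H(a)$ by $H(p+n+\frac12-k)$ after reindexing.

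Finally, to show this coincides with Ramanujan's formula \eqref{Inp} at integer $p$, I would argue that for integer $p$ the alternating sum $\sum_{k=0}^{2n+1}\binom{2n+1}{k}(-1)^k H(p+n+\frac12-k)$ is a partial alternating binomial sum, $\sum_{k=0}^{m}\binom{2n+1}{k}(-1)^k$ with $m = n+p$ (for $|p|\le n$), and the identity $\sum_{k=0}^{m}(-1)^k\binom{N}{k} = (-1)^m\binom{N-1}{m}$ reduces it to a single binomial coefficient. Then elementary Gamma-function manipulation (using $\Gamma(n+\frac12) = \frac{(2n)!\sqrt\pi}{4^n n!}$) matches the result to \eqref{Inp}, including the $(-1)^p$ factor; for $|p|>n$ both sides vanish, which is visible on our side because the Heaviside arguments are then either all $\ge 0$ or all $<0$, forcing the alternating sum to be $0$ or the full sum $\sum_k\binom{2n+1}{k}(-1)^k=0$.

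The main obstacle I anticipate is making the cancellation of the divergent $\log|y-\imath a|$ terms rigorous rather than formal: as Hardy's remark in the excerpt warns, this involves interchanging $\lim_{y\to0}$ with a finite sum and with the pseudo-differential (antiderivative) operator, and one must check the regularization is applied uniformly across all terms before the limit is taken, so that the divergences genuinely cancel termwise. A secondary bookkeeping challenge is the reindexing that merges the $k$-sum and the $\pm 2p$ pair into one clean sum with argument $p+n+\frac12-k$; getting the offset and the sign of the $(-1)^n$ prefactor right will require care but is routine once the elementary kernel evaluation is in hand.
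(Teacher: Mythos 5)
Your proposal is correct, but it takes a genuinely different route from the paper. The paper first symmetrizes to the whole real line, absorbs $\cos(2px)$ into $e^{2\imath px}$, and applies the Fourier-delta rule \eqref{Fourier limit}: the integral becomes $\frac{\pi}{\imath}\sin^{2n+1}\!\left(\imath\frac{\partial_p}{2}\right)H(p)$, and the binomial expansion then turns each exponential of $\partial_p$ into a shift $H(p)\mapsto H(p+a)$, giving the Heaviside sum immediately with no divergences to manage. You instead stay on the half-line, expand $\sin^{2n+1}x$ and $\cos(2px)$ into exponentials first, and evaluate each elementary kernel $\int_0^\infty e^{\imath a x}x^{-1}\,dx$ by the Laplace regularization $\frac{1}{-\partial_y}\frac{1}{y-\imath a}=-\log(y-\imath a)$, recovering $\frac{\pi}{2}\operatorname{sgn}(a)=\pi\bigl(H(a)-\tfrac12\bigr)$ from the imaginary part. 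Both work, and your bookkeeping (the prefactor $(2\imath)^{2n+1}=(-1)^n 2^{2n+1}\imath$, the merge of the $\pm 2p$ terms via $k\mapsto 2n+1-k$, and the partial alternating binomial sum $\sum_{k=0}^{n+p}(-1)^k\binom{2n+1}{k}=(-1)^{n+p}\binom{2n}{n+p}$ for the Ramanujan check, which the paper leaves to the reader) does come out right. What the paper's route buys is that the shift-operator formulation never produces individually divergent integrals; what your route buys is that it needs only the elementary Laplace kernel $\frac{1}{y-\imath a}$ rather than the distributional identity $\int e^{-\imath xy}dy=2\pi\delta(x)$. One caution on your cancellation argument: the identity $\sum_k(-1)^k\binom{2n+1}{k}=0$ kills only the common divergent constant in $\int_0^\infty \cos(ax)x^{-1}dx$; the finite residual $-\log|a_k^{\pm}|$ terms differ from term to term and cancel only under the involution $k\leftrightarrow 2n+1-k$ combined with swapping $+2p\leftrightarrow-2p$ (equivalently, because the purely imaginary prefactor forces the real part of the bracket to drop out of the real quantity $I_{n,p}$). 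You flag this as the delicate point, correctly, but the stated mechanism should be sharpened to the pairing argument.
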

\begin{proof}
First, use symmetry to rewrite $I_{n,p}$ as
\[
I_{n,p}=\frac{1}{2}\int_{-\infty}^{\infty}\frac{\sin^{2n+1}x}{x}\cos\left(2px\right)dx
=\frac{1}{2}\int_{-\infty}^{\infty}\frac{\sin^{2n+1}x}{x}e^{\imath 2px}dx
\]
and perform the substitution
\( x \mapsto \frac{x}{2} \) to obtain:
\[
I_{n,p} = \frac{1}{2} \int_{-\infty}^{\infty} \frac{\sin^{2n+1}\left(\frac{x}{2}\right)}{x} e^{i p x} \, dx.
\]
Letting \( f(x) = \frac{\sin^{2n+1}\left(\frac{x}{2}\right)}{2x} \), identity \eqref{Fourier limit} produces
\[
I_{n,p} 
= \pi \frac{\sin^{2n+1}\left(-i\frac{\partial_p}{2}\right)}{-i\partial_p} \delta(p)
= \frac{\pi}{i} \sin^{2n+1}\left(i\frac{\partial_p}{2}\right) H(p),
\]
where we have used the fact that the Heaviside function is an antiderivative for the Dirac delta distribution.

Now the power of the sine function is computed applying the binomial formula to obtain
\[
I_{n,p}=\imath\pi\left(\frac{e^{-\frac{\partial_{p}}{2}}-e^{\frac{\partial_{p}}{2}}}{2\imath}\right)^{2n+1}H(p)
\]
\[
=\left(-1\right)^{n+1}\frac{\pi}{2^{2n+1}}\sum_{k=0}^{2n+1}\binom{2n+1}{k}\left(-1\right)^{2n+1-k}e^{\left(-\frac{k}{2}+\frac{2n+1-k}{2}\right)\partial_{p}}H\left(p\right).
\]
Each pseudo-differential exponential $e^{a\partial_p}$ acts as a shift operator on the Heaviside function:
\[
e^{a\partial_p}H(p)=H(p+a)
\]
so that
\[
I_{n,p}=\frac{\left(-1\right)^{n}\pi}{2^{2n+1}}\sum_{k=0}^{2n+1}\binom{2n+1}{k}\left(-1\right)^{k}H\left(p+n+\frac{1}{2}-k\right),
\]
which completes the proof.
\end{proof}
The reader is invited to check that, in the case where $p$ is an integer, this formula for $I_{n,p}$ coincides with the value found by Ramanujan.

\section{Compatibility with Classical Integration Rules}

We have shown the power of this symbolic method through two simple examples in the one-dimensional case. Before we extend it to broader settings, we first verify that the method is compatible with the fundamental rules of calculus. In particular, let us check its compatibility with integration by parts and with change of variables.

\subsection{Integration by Parts}
We consider the symbolic integration rule
\[
\lim_{y \to 0} f(-\partial_y) \int_{0}^{\infty}g(x)e^{-xy}dx
= \int_{0}^{\infty}f(x)g(x)dx
\]
and wonder if it is compatible with the rule of integration by parts
\[
\int f(x)g'(x)dx
= f(x)g(x) - \int f'(x)g(x)dx.
\]
The method of integration by differentiation evaluates the left-hand side of this rule as

\[
\int f\left(x\right)g'\left(x\right)dx
=\lim_{y\to0}f\left(-\partial_{y}\right)\int g'\left(x\right)e^{-xy}dx
\]
a result that, after integration by parts,  coincides with
\[
\lim_{y\to0}f\left(-\partial_{y}\right)
\left( \left[g\left(x\right)e^{-xy}\right]
+
\int yg\left(x\right)e^{-xy}dx
\right)\]\[
=\lim_{y\to0}f\left(-\partial_{y}\right)\left[g\left(x\right)e^{-xy}\right]
+\lim_{y\to0}f\left(-\partial_{y}\right)\int yg\left(x\right)e^{-xy}dx
\]
The first term evaluates as
\[
\lim_{y\to0}f\left(-\partial_{y}\right)\left[g\left(x\right)e^{-xy}\right]=\lim_{y\to0}f\left(x\right)g\left(x\right)e^{-xy}=f\left(x\right)g\left(x\right)
\]
while the second term is computed noticing   that
\[
f\left(-\partial_{y}\right) y e^{-xy}
=-f\left(-\partial_{y}\right)\partial_x e^{-xy}
= -\partial_x f\left(-\partial_{y}\right)e^{-xy} \]
which is identified as
\[
 -\partial_x \left\{ f(x)e^{-xy}\right\}
 =-e^{-xy}\left(f'(x)-yf(x)\right)
\]
so that this second term is
\[
\lim_{y \to 0}\int - g(x) e^{-xy} (f'(x)-yf(x)) dx
= -\int g(x) f'(x) dx
\]
showing that the integration by parts rule is satisfied.

\subsection{Change of Variables}
We want to check that the change of variables formula
\[
\int f\left(\phi\left(x\right)\right)e^{-\phi\left(x\right)y}\phi'\left(x\right)dx=\int f\left(u\right)e^{-uy}du
\]
holds when both sides are computed using the method of integration
by differentiation.

The  integral on the right is evaluated by this method as 
\[
\int f\left(u\right)e^{-uy}du=f\left(-\partial_{y}\right)\int e^{-uy}du.
\]
while the  integral on the left is evaluated by this method as
\[
\int f\left(\phi(x)\right)e^{-y\phi(x)}\phi'(x)dx
=f\left(-\partial_{y}\right) \int e^{-y\phi(x)}\phi'(x)dx
=f\left(-\partial_{y}\right) \int e^{-yu}du.
\]
This shows that the method of integration by differentiation is compatible with the change of variables rule.

\section{Extension to the Multivariate Case}

Having verified its compatibility with classical integration rules, we now extend the method of integration by differentiation to the multivariate setting.

\subsection{Tensorization}

One nice feature of the method is that it naturally tensorizes across multiple variables. In the two-dimensional case, for instance, we propose the following formula
\begin{equation}
\label{bidimensional}
\int_0^\infty \int_0^\infty f(x, y) e^{-u x - v y} \, dx \, dy = f(-\partial_u, -\partial_v) \left( \frac{1}{uv} \right),
\end{equation}

for \(f\) an analytic function in two variables.
More generally, the method extends to the case of \(n\) variables as follows:
\[
\int_{[0,\infty)^n} f(x_1, \dots, x_n) e^{-\sum_{i=1}^n u_i x_i} \, dx_1 \cdots dx_n = f(-\partial_{u_1}, \dots, -\partial_{u_n}) \prod_{i=1}^n \frac{1}{u_i}.
\]

This natural extension to multiple variables presents a significant advantage of the method. 
Higher-dimensional integrals, such as $n-$th variate Laplace integrals, are usually difficult to compute, and are hard to find in the literature: we may recommend  Chapters 3 of the first and second volumes of Prudnikov's tables \cite{Pr1} and the less-known reference  \cite{poli}  for the two-dimensional case.


As previously, for an arbitrary subset \(I\in\mathbb{R}^n\) over which the integral converges, integration by differentiation is expressed by
\[
\int_I f(x_1, \dots, x_n) e^{-\sum_{i=1}^n u_i x_i} \, dx_1 \cdots dx_n = f(-\partial_{u_1}, \dots, -\partial_{u_n}) \int_I e^{-\sum_{i=1}^n u_i x_i} \, dx_1 \cdots dx_n.
\]

and its variant
\[
\int_I f(x_1, \dots, x_n)  \, dx_1 \cdots dx_n = \lim_{\mathbf{u}\to \mathbf{0}} f(-\partial_{u_1}, \dots, -\partial_{u_n}) \int_I e^{-\sum_{i=1}^n u_i x_i} \, dx_1 \cdots dx_n.
\]

\subsection{The case of rotational invariance}
A dedicated formula can be provided in the case where the integrand is rotationally invariant, i.e. is a function of the Euclidean norm $\vert \mathbf{x} \vert$ of $\mathbf{x}$ only \footnote{ Boldface letters represent $n-$dimensional vectors and $\mathbf{b}\mathbf{x}$ denotes the usual inner product $\sum_{i=1}^n b_ix_i.$}: an integral of the form 
\[
\int_{\mathbf{R}^{n}}f\left(\vert\mathbf{x}\vert\right)e^{-u\vert\mathbf{x}\vert}d\mathbf{x},\,\,u>0,
\]
can be computed as
\[
\int_{\mathbf{R}^{n}}f\left(\vert\mathbf{x}\vert\right)e^{-u\vert\mathbf{x}\vert}d\mathbf{x}=f\left(-\partial_{u}\right)\int_{\mathbf{R}^{n}}e^{-u\vert\mathbf{x}\vert}d\mathbf{x}
\]
The integral on the right is transformed using the multivariate change of variable
$\mathbf{y}=u\mathbf{x}$ with Jacobian $\frac{1}{u^n}$ as
\[
\int_{\mathbf{R}^{n}}e^{-u\vert\mathbf{x}\vert}d\mathbf{x}=\frac{1}{u^{n}}\int_{\mathbf{R}^{n}}e^{-\vert\mathbf{y}\vert}d\mathbf{y}
\]
and the remaining integral is computed using entry (3.3.2.1) in \cite{Pr1} 
as
\[
\int_{\mathbf{R}^{n}}e^{-\vert\mathbf{y}\vert}d\mathbf{y}
=\frac{2\pi^{\frac{n}{2}}\Gamma\left(n\right)}{\Gamma\left(\frac{n}{2}\right)}.
\]
We deduce the version of integration by differentiation in the rotationally invariant case as
\[
\int_{\mathbf{R}^{n}}f\left(\vert\mathbf{x}\vert\right)e^{-u\vert\mathbf{x}\vert}d\mathbf{x}=\frac{2\pi^{\frac{n}{2}}\Gamma\left(n\right)}{\Gamma\left(\frac{n}{2}\right)}f\left(-\partial_{u}\right)\frac{1}{u^{n}}
\]
and its limit version
\[
\int_{\mathbf{R}^{n}}f\left(\vert\mathbf{x}\vert\right)d\mathbf{x}=\frac{2\pi^{\frac{n}{2}}\Gamma\left(n\right)}{\Gamma\left(\frac{n}{2}\right)}\lim_{u\to 0}f\left(-\partial_{u}\right)\frac{1}{u^{n}}.
\]
A simple example is given by the integral
\[
I_{n}=\int_{\mathbf{R}^{n}} \frac{\sin \vert \mathbf{x}\vert  }{\vert \mathbf{x}\vert^n} d\mathbf{x} ,\,\,n\ge 1,
\]
The action of the operator $f(-\partial_u)=\frac{\sin(-\partial_u)}{(-\partial_u)^n}$ on the function $\frac{1}{u^n}$  is computed as previously in the case $n=1$ and produces
 \[
\frac{1}{(n-1)!}\frac{\log(u+\imath)-\log(u-\imath)}{2\imath}
\]
so that, after simplification,
\[
I_{n}=\frac{\pi^{\frac{n}{2}+1}}{\Gamma\left(\frac{n}{2}\right)},\,\,n\ge 1.
 \]

This result agrees  with the one obtained by direct application of the limit case $r\to +\infty$  in  formula (3.3.2.1) in \cite{Pr1}:
\[
\int_{\vert\mathbf{x}\vert^2\le r^2} f(\vert \mathbf{x}\vert ^2)d\mathbf{x} = \frac{2 \pi^{n/2} }{\Gamma(\frac{n}{2})} \int_0^{r}t^{n-1}f(t^2)dt.   
\]

In the next section, we will illustrate the efficiency of these rules through evaluation of multivariate Laplace integrals and computations of volumes of simplexes.

\section{Multivariate Applications}
\subsection{a family of bivariate Laplace transforms}
The following evaluation appears as entry (3.1.3.8) in \cite{Pr1}: for $u>0,v>0$ such that $u \ne v,$ \footnote{this formula corrects a sign mistake in \cite[3.1.3.8]{Pr1}}
\[
\int_{0}^{\infty}\int_{0}^{\infty}\frac{1}{x+y}e^{-ux-vy}dxdy=\frac{\log u-\log v}{u-v},
\]
and can be obtained as follows:
first apply the bidimensional integration by differentiation rule \eqref{bidimensional} to obtain
\[
\int_{0}^{\infty}\int_{0}^{\infty}\frac{1}{x+y}e^{-ux-vy}dxdy = 
\frac{1}{-\partial_{u}-\partial_{v}}\frac{1}{uv}.
\]
Now we use the Euler integral to express the pseudo-differential operator as
\[
\frac{1}{\partial_{u}+\partial_{v}} = \int_{0}^{\infty}e^{-w\left(\partial_{u}+\partial_{v}\right)}dw
\]
so that
\[
\frac{1}{-\partial_{u}-\partial_{v}}\frac{1}{uv} 
=-\int_{0}^{\infty}e^{-w\left(\partial_{u}+\partial_{v}\right)}dw\frac{1}{uv}=
-\int_{0}^{\infty} \frac{dw}{(u-w)(v-w)}dw.
\]
Partial fraction decomposition produces the desired result
\[
\frac{\log (-v) - \log (-u)}{v-u}=\frac{\log (v) - \log (u)}{v-u}.
\]
The method extends naturally to the family of  integrals \cite[3.1.3.7]{Pr1}
\[
\int_0^{\infty} \int_0^{\infty}
(x+y)^{\nu - 1}
e^{-ux-vy}dxdy = \Gamma(\nu) \frac{u^\nu - v^\nu}{(u-v)(uv)^\nu}
\]
with $\text{Re}(\nu)>0.$ The details are left to the reader.
\subsection{A Multivariate Euler-Like Integral}
The following multivariate integral appears as Entry 3.3.5.4 in \cite{Pr1}: with $a_k,b_k,\nu_k$ and $\mu$ positive real numbers,
 \[\begin{aligned}\int _{0}^{\infty } \dots \int _{0}^{\infty }\dfrac{e^{-\mathbf{b}\mathbf{x}}}{\left( a_{0}+\mathbf{a}\mathbf{x}\right) ^{\mu }}\left(\prod ^{n}_{k=1}x_{k}^{\nu _{k}-1}\right)dx_{1} \ldots dx_{n}\\
=\Gamma \begin{pmatrix}
\nu_{1},  & \ldots  &, \nu _{n} \\
 & \mu  & 
\end{pmatrix}\int _{0}^{\infty }e^{-a_{0}t}t^{\mu -1}\prod ^{n}_{k=1}\left( b_{k}+a_{k}t\right) ^{-\nu_{k}}dt.\end{aligned}\]
This identity shows a remarkable  symmetry: the reduction of the multivariate integral on the left to a univariate integral on the right results in the exchange of parameters $\mu$ to $\nu$.
The method of integration by differentiation suggests expressing the multivariate integral as
\[
\dfrac{1}{\left( a_{0}-\mathbf{a}\mathbf{\partial_b}\right) ^{\mu }}
\int _{0}^{\infty } \dots \int _{0}^{\infty }
e^{-\mathbf{b}\mathbf{x}}\left(\prod ^{n}_{k=1}x_{k}^{\nu _{k}-1}\right)dx_{1} \ldots dx_{n}
\]
with the benefit that the resulting multivariate integral now factors as the product of $n$ Euler integrals
\[
\prod_{k=1}^{n}
\int_{0}^{\infty}
e^{-b_k x_k}x_k^{\nu_k-1}dx_{k} = \prod_{k=1}^{n}
\frac{\Gamma(\nu_k)}{b_k^{\nu_k}}.
\]
It remains to compute the action of the pseudo-differential operator $\dfrac{1}{\left( a_{0}-\mathbf{a}\mathbf{\partial_b}\right) ^{\mu }}$ on this function: we'll use again an Euler integral representation, namely
\[
\dfrac{1}{\left( a_{0}-\mathbf{a}\mathbf{\partial_b}\right) ^{\mu }} 
= \frac{1}{\Gamma(\mu)}\int_{0}^{\infty}
e^{-t(a_0-\mathbf{a}\mathbf{\partial_b})}t^{\mu -1}dt
\]
to obtain the desired integral as
\[
\int_{0}^{\infty}
e^{-t(a_0-\mathbf{a}\mathbf{\partial_b})}t^{\mu -1}dt
\prod_{k=1}^{n}
\frac{\Gamma(\nu_k)}{b_k^{\nu_k}}.
\]
Since
\[
e^{t\mathbf{a}\mathbf{\partial_b}}=\prod_{k=1}^{n}e^{t a_k \partial_{b_k}},
\]
we deduce the final integral as
\[
\frac{\prod_{k=1}^{n}
\Gamma(\nu_k)}{\Gamma(\mu)}
\int_{0}^{\infty}
e^{-ta_0}t^{\mu -1}
\prod_{k=1}^{n}
(a_{k}t+b_k)^{-\nu_k}dt.
\]

\section{Computation of Volumes}

We now explore a particularly unexpected application of the integration by differentiation rule that demonstrates the method's scope beyond the field of analytic functions.  The following example illustrates how the volume of a standard simplex can be computed through the Laplace transforms of their indicator functions, using the multivariate version of the integration by differentiation method.

\subsection{Laplace Transform of the Simplex Indicator}

In this section, we are concerned with the Laplace transform of the indicator function $\mathrm{1}_{S_n}$ of the $n-$dimensional simplex
\[
S_n = \left\{ \mathbf{x} \in \mathbb{R}^n : x_i \ge 0 \text{ for all } i, \; \sum_{i=1}^n x_i \le 1 \right\}.
\]
The indicator function is the function $\mathrm{1}_{S_n}:\mathbb{R}^n \to \{0,1\}$ defined by
\[
\mathrm{1}_{S_n}(\mathbf{x}) = \begin{cases}
1, & \mathbf{x} \in S_n, \\
0, & \text{else},
\end{cases}
\]
so that its Laplace transform
\[
I_{S_n}(\mathbf{a}) = \int_{\mathbb{R}^n} \mathrm{1}_{S_n}(\mathbf{x}) e^{-\mathbf{a} \cdot \mathbf{x}} d\mathbf{x}
\]
coincides with the Laplace transform computed on the simplex
\[
I_{S_n}(\mathbf{a}) = \int_{S_n} 
e^{-\mathbf{a} \cdot \mathbf{x}} d\mathbf{x}.
\]
Notice that its value at $\mathbf{a} = 0$ is the volume of the simplex $S_n$.

Our goal is to show that integration by differentiation allows to compute very easily this classic but difficult integral that appears as entry 3.3.4.17 in \cite{Pr1} :
\begin{equation}
\label{ISn(a)}
    I_{S_n}(\mathbf{a}) = \sum_{k=1}^n \frac{e^{-a_k} - 1}{(-a_k) \varphi'(-a_k)},
\end{equation}

where
\begin{equation}
\label{varphi}
\varphi(x) = \prod_{i=1}^{n}(x+a_i).
\end{equation}
Using the method of integration by differentiation, this integral is expressed as
\begin{equation}
\label{indicator}    
I_{S_n}(\mathbf{a}) = \mathrm{1}_{S_n}(-\partial_{a_1}, \dots, -\partial_{a_n}) \left( \frac{1}{a_1 \dots a_n} \right).
\end{equation}

Looking first at the obvious $1-$dimensional case will allow us to introduce the necessary tools to proceed easily to the arbitrary $n-$dimensional case.
\subsection{One-dimensional case}

In dimension $1$, the simplex $S_1$ coincides with the unit interval $[0,1]$  and the desired Laplace transform is simply
\[
I_{[0,1]}(a) = \int_0^1 e^{-ax} dx = \frac{1 - e^{-a}}{a}.
\]
Let us see how it can be computed using formula \eqref{indicator}.
The indicator function is
\[
\mathrm{1}_{S_1}(x) = H(1-x),
\]
with $H$ the Heaviside function,
so that \eqref{indicator} reads
\[
I_{[0,1]}(a) = H(1 + \partial_a) \left( \frac{1}{a} \right).
\]
Now we need a representation of the Heaviside function that will allow us to easily compute the action of the operator $H(1+\partial_a)$ on $\frac{1}{a}.$
This representation is provided by the well-known Fourier integral decomposition of the sign function
\[
\text{sign}(x) = \frac{1}{\pi} \int_{\mathbb{R}} \frac{\sin(xy)}{y} dy,
\]
and the identity $H(x) = \frac{1}{2}(1 + \text{sign}(x))$, from which we deduce
\begin{equation}
\label{H}
H(x) = \frac{1}{2}\left(1 + \frac{1}{\pi} \int_{\mathbb{R}} \frac{\sin(xy)}{y} dy\right).
\end{equation}
Thus, the indicator function is computed as
\[
I_{[0,1]}(a) = H(1 + \partial_a) \left( \frac{1}{a} \right) = \frac{1}{2a} + \frac{1}{2\pi} \int_{\mathbb{R}} \frac{\sin\left((1+\partial_a)y\right)}{y} dy \left( \frac{1}{a} \right).
\]
Writing the sine term as the imaginary part
$
\text{Im} \left(e^{iy}e^{iy\partial_a}\right),
$
the action  of this operator  on  $1/a$ is deduced as
\[
\sin((1+\partial_a)y) \frac{1}{a} 
= \text{Im}\left( \frac{e^{iy}}{a+iy} \right) = \frac{a \sin y - y \cos y}{a^2 + y^2},
\]
so that
\[
\int_{\mathbb{R}} \frac{\sin((1+\partial_a)y)}{y} dy \frac{1}{a} = \int_{\mathbb{R}} \frac{a \sin y - y \cos y}{y(a^2 + y^2)} dy, 
\]
an integral easily computed as $\pi \frac{1-2e^{-a}}{a}$ to produce
\[
I_{[0,1]}(a) = \frac{1}{2a} + \frac{1}{2\pi}\left(\pi \frac{1-2e^{-a}}{a}\right) = \frac{1-e^{-a}}{a},
\]
a result that agrees with the direct computation.

\subsection{Arbitrary dimensional case}

The $n-$th dimensional simplex $S_n$ has indicator function
\[
\mathrm{1}_{S_n}(\mathbf{x}) = H(1-\sum_{i=1}^n x_i).
\]
Thus,
\[
I_{S_n}(\mathbf{a}) = H(1+ \sum_{i=1}^n \partial_{a_i}) \left( \frac{1}{a_1 \dots a_n} \right).
\]
Using again the integral representation \eqref{H}  and expanding as previously produces
\[
\sin((1+\sum_{i=1}^n \partial_{a_i})y) \left( \frac{1}{a_1 \dots a_n} \right) = \text{Im} \left( \frac{e^{iy}}{(a_1+iy)\dots(a_n+iy)} \right)
= \text{Im} \left( e^{iy}\varphi(\imath y)\right).
\]
with $\varphi(x)$ defined as in \eqref{varphi}. Partial fractions decomposition  of the function $\varphi(\imath y)$
and integration produce the desired result  \eqref{ISn(a)}.


\subsection{Another application of the indicator function}
The next example provides further insight about the way the indicator function can be used in the method of integration by differentiation. 
For positive parameters $\alpha,\beta,\gamma$, let us consider the integral over the three-dimensional simplex
\[
I_{\alpha,\beta,\gamma}=\int_{S_3}
x^{\alpha-1}
y^{\beta-1}
z^{\gamma-1}
f(x+y+z)dxdydz\]
where $f$ is an arbitrary function such that the integral is defined. Our goal is to express this integral as a one-dimensional integral by exploiting the invariance of the function $f(x+y+z)$  on hyperplanes. Let us consider the parameterized integral
\[
J_{\alpha,\beta,\gamma}(u)=\int_{S_3}x^{\alpha-1}
y^{\beta-1}
z^{\gamma-1}
f(x+y+z)e^{-u(x+y+z)}
dxdydz,\,\,u>0,
\]
and notice that, after insertion of the proper indicator function, it can be expressed as an integral over the whole three-dimensional space
\[
J_{\alpha,\beta,\gamma}(u)=\int_{\mathbb{R}^3}H(1-x-y-z)x^{\alpha-1}
y^{\beta-1}
z^{\gamma-1}
f(x+y+z)e^{-u(x+y+z)}
dxdydz.
\]
The  integration by differentiation technique produces the  formula
\[
J_{\alpha,\beta,\gamma}(u)=H(1+\partial_u)
f(-\partial_u)
\int_{\mathbb{R}^3}x^{\alpha-1}
y^{\beta-1}
z^{\gamma-1}
e^{-u(x+y+z)}
dxdydz
\]
where the resulting integral decouples as the product of three Euler integrals:
\[
\int_{\mathbb{R}^3}x^{\alpha-1}
y^{\beta-1}
z^{\gamma-1}
e^{-u(x+y+z)}
dxdydz =
\frac{\Gamma(\alpha)\Gamma(\beta)\Gamma(\gamma)}{u^{\alpha+\beta+\gamma}}.
\] 
This quantity can in turn be expressed as the Euler integral:
\[
\frac{\Gamma(\alpha)\Gamma(\beta)\Gamma(\gamma)}{\Gamma(\alpha+\beta+\gamma)}
\int_{0}^{+\infty}e^{-ut}t^{\alpha+\beta+\gamma-1}dt.
\]
The integral $J_{\alpha,\beta,\gamma}(u)$ is finally computed as
\[
J_{\alpha,\beta,\gamma}(u)=
\frac{\Gamma(\alpha)\Gamma(\beta)\Gamma(\gamma)}{\Gamma(\alpha+\beta+\gamma)}
H(1+\partial_u)
f(-\partial_u)
\int_{0}^{+\infty}e^{-ut}t^{\alpha+\beta+\gamma-1}dt.
\]
The action of the pseudo-differential operator $H(1+\partial_u)
f(-\partial_u)$
on the integral restricts its domain to the interval $\left[0,1\right]$
and reinserts the function $f$ in the integrand: this produces 
\[
J_{\alpha,\beta,\gamma}(u)=
\frac{\Gamma(\alpha)\Gamma(\beta)\Gamma(\gamma)}{\Gamma(\alpha+\beta+\gamma)}
\int_{0}^{1}f(t)e^{-ut}t^{\alpha+\beta+\gamma-1}dt
\]
and the desired result as the limit as $u\to 0$
\[
I_{\alpha,\beta,\gamma}=
\frac{\Gamma(\alpha)\Gamma(\beta)\Gamma(\gamma)}{\Gamma(\alpha+\beta+\gamma)}
\int_{0}^{1}f(t)t^{\alpha+\beta+\gamma-1}dt.
\]
This identity appears as Entry (3.2.2.3) in \cite{Pr1}.

Let us notice that, following the same steps, this identity extends to the $n-$variate case: 
\[
\int_{S_n}
f(\sum_{i=1}^n x_i)
\prod_{i=1}^n
\frac{x_i^{a_i}}{\Gamma(a_i)}
\frac{dx_i}{x_i}
= 
\int_{0}^{1}f(t)
\frac{t^{\sum_{i=1}^n a_i}}{\Gamma(\sum_{i=1}^n a_i)}\frac{dt}{t}.
\]
Moreover, let  us highlight the two remarkable ways the pseudo-differential operator $H(1+\partial_u)
f(-\partial_u)$ acts in this computation: it  produces a natural decoupling of the three-dimensional integral and a transparent handling of the function $f$.
\section{$q-$integration by $q-$differentiation}
\subsection{introduction to $q-$calculus}

The theory of $q-$calculus deals with non-commutative operators $A$ and $B$ that satisfy $AB=qBA$ for a real parameter $q$ usually chosen such that $\vert q \vert <1.$ The computation of the operator $(A+B)^n$,  with $n$ an integer, reveals the q-factorial number $[j]!_q=\prod_{k=1}^{j}[k]_q$, where the q-integer number $[j]_q$ is defined by $[j]_q=\frac{1-q^j}{1-q}.$ Notice that, as $q \to 1,$ these numbers  coincide with the usual factorial and integers numbers. The reader is referred to the wonderful book by V. Kac  and P. Cheung for more details about the $q-$calculus theory.

The theory introduces next the $q-$differential operator \cite[(1.5)]{Kac}
\[
D_q f(x) = \frac{f(qx)-f(x)}{(q-1)x},
\]
and the $q-$exponential function
\[e_q(x) = \sum_{j\ge0} \frac{x^j}{\left[j\right]!_q}
=\prod_{n\ge1}\frac{1}{(1-x(1-q)q^n)}
\]
that is easily shown to satisfy the differential equation \cite[(9.11)]{Kac}
\[
D_q e_q(x) =e_q(x),
\]
and more generally, for a real constant $a,$
\[
D_q e_q(ax)  =a e_q(ax).
\]
as a consequence of
\[
D_q e_q(ax) =\frac{e_q(qax)-e_q(ax)}{(q-1)x}
=\frac{e_q(qax)-e_q(ax)}{(q-1)ax}\frac{qax-ax}{(q-1)x}.
\]
\subsection{Jackson's integral}
Jackson's integral appears as a variant of the Riemann sum where the sampling points are chosen according to a geometric  distribution:
\[
\int f(x)d_{q}x
=(1-q)\sum_{n\ge0}xq^nf(xq^n)
\]
with the definite versions
\[
\int_{0}^{b} f(x)d_{q}x
=(1-q)b \sum_{n\ge0}q^nf(q^n b),
\]
\[
\int_{a}^{b} f(x)d_{q}x = 
\int_{0}^{b} f(x)d_{q}x 
-
\int_{0}^{a} f(x)d_{q}x. 
\]
and
\[
\int_{0}^{\infty} f(x)d_{q}x
=(1-q) \sum_{n\ge0}q^nf(q^n).
\]
 
It is an easy exercise \cite[Chapter 18]{Kac} to show that  Jackson's integral is the inverse operator  of the $q-$differential operator $D_q$:
\[
D_q \int f(x)d_{q}x = f(x).
\]

For  a function $f$  such that the Jackson integral $\int_{a}^{b} f(x) d_qx$ exists, we deduce the extension of the integration by differentiation rule to Jackson's integrals as
\begin{equation}
\int_{a}^{b} f(x) d_qx = \lim_{y \to 0} f(D_q)\frac{e_{q}(by)-e_{q}(ay)}{y}.
\label{q-integration}
\end{equation} 



\subsection{Example: $q-$integration by $q-$differentiation of Hurwitz zeta function}
Let us illustrate the potential of this rule to recover the Jackson integral of the Hurwitz zeta function
\[
\zeta(s,a)=\sum_{n\ge 0} \frac{1}{(a+n)^s}
\]
as first derived by Kurokawa et al. \cite[Thm. 2]{kurokawa}: for $\text{Re}(s)>1,$
\[ \int_{0}^{1} \zeta(s,a) d_q a = \frac{1}{[1-s]_q} + \sum_{k=0}^{\infty} \binom{-s}{k} \frac{\zeta(s+k)}{[k+1]_q}.\]
\begin{proof}
We start with the observation that
\[
\zeta(s,a)=a^{-s}+\zeta(s,a+1)
\]
so that
\begin{align*}
    \int_{0}^{1} \zeta(s,a) d_qa = \int_{0}^{1} a^{-s} + \zeta(s,1+a) d_qa &= \frac{1}{[1-s]_q} +  \int_{0}^{1} \zeta(s,1+a) d_qa.
\end{align*}
The remaining integral is computed using formula \eqref{q-integration} with $f(x)=\zeta(s,1+x)$ so that  we need to expand the operator
\begin{align*}
    &\zeta(s,1+D_q) = \sum_{p=1}^{\infty} \frac{1}{p^s \left(1+ \frac{D_q}{p}\right)^s} = \sum_{p=1}^{\infty} \sum_{k=0}^{\infty} \binom{-s}{k} \frac{D_{q}^{k}}{p^{s+k}} \\
    &= \sum_{k=0}^{\infty} \binom{-s}{k}  \sum_{p=1}^{\infty} \frac{1}{p^{s+k}} D_{q}^{k}= \sum_{k=0}^{\infty} \binom{-s}{k}  \zeta(s+k)D_{q}^{k}.
\end{align*}
We deduce the value of the integral
\begin{align*}
    \int_{0}^{1} \zeta(s,1+a) d_qa = \lim_{y \to 0} \sum_{k=0}^{\infty} \binom{-s}{k}  \zeta(s+k)
    D_{q}^{k}    \frac{e_{q}(y)-1}{y} = \sum_{k=0}^{\infty} \binom{-s}{k} \frac{\zeta(s+k)}{[k+1]_q}.
\end{align*}
where $
D_{q}^{k}    \frac{e_{q}(y)-1}{y}
$
is identified as the $k-$th Taylor coefficient (see \cite[Ch. 4]{Kac}) of the function $\frac{e_{q}^{y}-1}{y}$, hence as $\frac{1}{[k+1]_q}$
and we deduce the result.

\end{proof} 

\section{Conclusion}
Integration by differentiation is one of several symbolic methods for evaluating integrals, standing alongside techniques such as Ramanujan’s Master Theorem. 
As we have shown, this method transforms the often intractable task of evaluating an integral into the problem of applying a pseudo-differential operator to a simpler function. Although the method often offers computational advantages, it usually presents two main challenges. First, one must carefully and creatively find a representation of the operator that is suitable for the given problem. Second, as for most symbolic methods, the result must be confirmed analytically after computation.

Despite these challenges, we were positively impressed by the versatility of this method;  its ability to handle indicator functions and simplify the domain of integration is a particularly useful asset.

Looking forward, we see promising ways to extend this method. For example, \cite{Cohen}, 
introduces kernals $u(x,y)$ that are eigenfunctions of non standard pseudo-differential operators $D_{x}$, satisfying
\[
D_x u(x,y)=yu(x,y).
\]
Using this property, we can deduce the symbolic integration rule:
\[
\int f(y)g(y)dy = \lim_{x\to 0}f(D_x)\int u(x,y)g(y)dy. 
\]
Two examples of such kernels are:
\begin{itemize}
    \item \( u(x, y) = e^{\imath(xy - \frac{x^2}{2})} \), with associated operator \( D_x = x - \imath \frac{d}{dx} \);
    \item \( u(x, y) = e^{-\imath x y} e^{\imath \frac{x^3}{3}} \), with associated operator \( D_x = \imath \frac{d}{dx} + x^3 \).
\end{itemize}
These constructions suggest a broader symbolic framework for evaluating integrals. We plan to explore these ideas further in future work.

\section{Acknowledgments}
The authors thank Parth Chavan for providing the example related to the Jackson integral, and Michael Joyce for providing the opportunity to start this project.

\begin{thebibliography}{25}


%
%

\bibitem{RII}
B. C. Berndt, \textit{Ramanujan's Notebook, Part II}, Springer, 1989.

\bibitem{Cohen}
L. Cohen, The Weyl operator and its Generalization, Birkh\"{a}user, 2013.

\bibitem{Hardy}
G. H. Hardy, The $\int_0^{\infty} \frac{\sin x}{x}dx$, The Mathematical Gazette, Volume 5, Issue 80, July 1909, pp. 98--103.

\bibitem{Kempf}
A. Kempf, D.M. Jackson and A.H. Morales, How to (path-)integrate by differentiating,  J. Phys.: Conf. Ser. 626, 7th International Workshop DICE 2014 Spacetime - Matter - Quantum Mechanics, 012015, 2015.

\bibitem{Jia}D. Jia, E. Tang and A. Kempf, Integration by
differentiation: new proofs, methods and examples, J. Phys. A: Math.
Theor. 50, 235201, 2017.



\bibitem{Pr1}
A.P. Prudnikov, I.U.A. Brychkov and O.I. Marichev, Integrals and Series: Elementary functions, Vol 1, Gordon and Breach Science Publishers, 1986.

\bibitem{Kempfb}
A. Kempf, D.M. Jackson and A. H. Morales, New Dirac delta function based methods with applications to perturbative expansions in quantum field theory, J. Phys. A: Math. Theor. 47, 415204, 2014. 

\bibitem{Stein}
Stein E., Harmonic Analysis: Real-Variable Methods, Orthogonality and Oscillatory Integrals, Princeton University Press, 1993.

\bibitem{Kac}V. Kac, P. Cheung, Quantum calculus, Springer, 2002.

\bibitem{kurokawa}
N. Kurokawa, K. Mimachi and  M. Wakayama,
Jackson's integral of the Hurwitz zeta function, 
Rendiconti del circolo matematico di Palermo, Serie II, Tomo LVI (2007), pp. 43--56

\bibitem{poli}
Poli L. and Delerue, P., Le calcul symbolique à deux variables et ses applications, Mémorial des sciences mathématiques, vol. 127, 1954, 
\url{http://www.numdam.org/item?id=MSM_1954__127__1_0}.
\end{thebibliography}

\end{document}